\documentclass[11pt,reqno]{amsart}
\usepackage{tikz}
\usetikzlibrary{calc,through,backgrounds,decorations.pathreplacing,decorations.pathmorphing,shapes.geometric}
\usepackage[utf8]{inputenc}
\usepackage[english]{babel}
\usepackage{amsmath}
\usepackage{amssymb}
\usepackage{bbm}
\usepackage{latexsym}
\usepackage{a4wide}
\newcommand{\captionfonts}{\Small}
\makeatletter
\long\def\@makecaption#1#2{%
  \vskip\abovecaptionskip
  \sbox\@tempboxa{{\captionfonts #1: #2}}%
  \ifdim \wd\@tempboxa >\hsize
    {\captionfonts #1: #2\par}
  \else
    \hbox to\hsize{\hfil\box\@tempboxa\hfil}%
  \fi
  \vskip\belowcaptionskip}
\makeatother
\usepackage{graphicx}

\DeclareMathOperator{\conv}{conv}
\DeclareMathOperator{\ccone}{ccone}

\newcommand{\R}{\mathbb{R}}

\newcommand{\Q}{\mathbb{Q}}

\newcommand{\scalProd}[2]{\langle{#1},{#2}\rangle}
\newcommand{\setDef}[2]{\{{#1}\,:\,{#2}\}}

\newcommand{\zeroVec}[1]{\mathbb{O}}
\newcommand{\oneVec}[1]{\mathbbm{1}}

\newcommand{\sd}[1]{\delta({#1})}

\newcommand{\qsd}[1]{\Delta({#1})}
\DeclareMathOperator{\polyOp}{P}
\newcommand{\polyLe}[2]{\polyOp^{\le}({#1},{#2})}
\DeclareMathOperator{\kernelOp}{ker}
\newcommand{\kernel}[1]{\kernelOp({#1})}

\newcommand{\norm}[1]{\|{#1}\|}

\DeclareMathOperator{\transposeOp}{T}
\newcommand{\transpose}[1]{{#1}^{\transposeOp}}

\newcommand{\row}[2]{{#1}_{{#2},\star}}

\newcommand{\st}{^{\star}}
\DeclareMathOperator{\NP}{NP}

\newtheorem{theorem}{Theorem}
\newtheorem{lemma}{Lemma}

\title{Another Proof of the Fact that Polyhedral Cones are Finitely Generated}
\author{Volker Kaibel}
\email{kaibel@ovgu.de}%
\address{Otto-von-Guericke Universit\"at Magdeburg, Fakult\"at f\"ur Mathematik, Universit\"atsplatz~2, 39106~Magdeburg, Germany}
\date{\today}

\begin{document}

\begin{abstract}
	In this note, we work out a simple inductive proof showing that every polyhedral cone~$K$ is the conic hull of a finite set~$X$ of vectors. The base cases of the induction are linear subspaces and linear halfspaces of linear subspaces. The proof also shows that the components of the vectors in~$X$ can be chosen (up to their sign) to be quotients of subdeterminants of the coefficient matrix of any inequality system defining~$K$.
\end{abstract}

\maketitle


A matrix~$A\in\R^{m\times n}$ and a vector~$b\in\R^m$ define the \emph{polyhedron}
	$\polyLe{A}{b}=\setDef{x\in\R^n}{Ax\le b}$. 
A polyhedron $\polyLe{A}{\zeroVec{}}$ is a \emph{polyhedral cone}. A \emph{polytope} is the \emph{convex hull}
\begin{equation*}
	\conv(X)=\setDef{\sum_{x\in X}\lambda_xx}{\lambda_x\in\R, \lambda_x\ge 0\text{ for all }x\in X,\sum_{x\in X}\lambda_x=1}
\end{equation*}
of a finite set $X\subseteq\R^n$, and a
 \emph{finitely generated cone} is the (\emph{convex}) \emph{conic hull}
\begin{equation*}
	\ccone(X)=\setDef{\sum_{x\in X}\lambda_xx}{\lambda_x\in\R, \lambda_x\ge 0\text{ for all }x\in X}
\end{equation*}
of  a finite set~$X\subseteq\R^n$. A classical theorem (which is at the core of the theory of polyhedra) due to Weyl~\cite{Wey35} and Minkowski~\cite{Min1896} states that a subset of~$\R^n$ is a polyhedron if and only if it is the \emph{Minkowski sum} ($S+T=\setDef{s+t}{s\in S,t\in T}$ for $S,T\subseteq \R^n$) of a polytope and a finitely generated cone. 

A representation $P=\polyLe{A}{b}$ (with $A\in\R^{m\times n}$, $b\in\R^m$) of a polyhedron~$P\subseteq\R^n$ is called an \emph{outer description}, while $P=\conv(V)+\ccone(W)$ with finite sets $V,W\subseteq \R^n$ is an \emph{inner description}. Later refinements (which are very important for the theory of linear and integer programming) state that, given one representation of a polyhedron~$P$, there is a representation of~$P$ of the other type all of whose components are quotients of determinants of matrices (of size~$n\times n$) formed from components of the given representation. In particular, if the given representation of~$P$ is rational, then one can choose a rational representation of~$P$ of the other type all of whose components have an
 encoding length (say, in the binary system) that is bounded polynomially in~$n$ and the maximal encoding length of any component of the given representation (see, e.g., \cite[Thm.~10.2]{Sch86}). This is not only necessary for the polynomial solvability of the linear programming problem, but it is also crucial for establishing that the integer programming feasibilty problem is contained in the complexity class~$\NP$ (see, e.g., \cite[Cor.~17.1b]{Sch86}).

We denote by~$\sd{M}$ the set of all determinants of submatrices (formed by  arbitrary subsets of rows and columns of equal cardinality, including the empty submatrix, whose determinant is considered to be one) of a matrix $M\in\R^{m\times n}$, and define
\begin{equation*}
	\qsd{M}=\setDef{\tfrac{p}{q}}{p,q\in \sd{M}\cup (-\sd{M}), q\ne 0}\,.
\end{equation*}
Clearly, for \emph{rational} matrices $M\in\Q^{m\times n}$, we have $\qsd{M}\subseteq\Q$. It is well-known that, using the concepts of \emph{homogenization} and \emph{polarity},
one can easily derive 
 Weyl's and Minkowski's theorem, including the above mentioned refinements, from the following result (which is itself a special case of one direction of the Weyl-Minkowski theorem). 

\begin{theorem}\label{thm:polyConeFinite}
	For every matrix $A\in\R^{m\times n}$, there is a finite set $X\subseteq\qsd{A}^n$ with 
	\begin{equation*}
		\polyLe{A}{\zeroVec{}}=\ccone(X)\,.
	\end{equation*}
\end{theorem}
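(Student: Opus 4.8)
The plan is to prove the statement by induction on the quantity $d(K):=\dim K-\dim L$, where $K:=\polyLe{A}{\zeroVec{}}$ and $L:=\kernel{A}$ is the lineality space of $K$ (note that $L=\setDef{x}{Ax\le\zeroVec{}, -Ax\le\zeroVec{}}$ is precisely the largest linear subspace contained in $K$). The two base cases are $d(K)=0$, where $K=L$ is a linear subspace, and $d(K)=1$, where $K$ is a linear halfspace of the subspace $\lin{K}$; these are exactly the base cases announced in the abstract. In both base cases I would produce the generators by Cramer's rule: choosing a square nonsingular submatrix of $A$ of order $\mathrm{rank}(A)$ (resp.\ of order $\mathrm{rank}(A)-1$ for the single extra ray needed when $d(K)=1$) and solving the corresponding subsystem, the resulting solution vectors have components that are, up to sign, ratios of two subdeterminants of $A$, hence lie in $\qsd{A}$. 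Taking a basis of $L$ together with its negatives (and, for $d(K)=1$, one additional ray $v$ with $Av\le\zeroVec{}$ and $v\notin L$) then yields a generating set $X\subseteq\qsd{A}^n$.

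For the inductive step assume $d(K)\ge 2$. Let $I:=\setDef{i\in\ints{m}}{\row{A}{i}x<0\text{ for some }x\in K}$ be the set of ``genuine'' inequalities, and for $i\in I$ consider the face $F_i:=K\cap\setDef{x}{\row{A}{i}x=0}$. Each $F_i$ is again a polyhedral cone, namely $F_i=\polyLe{A^{(i)}}{\zeroVec{}}$, where $A^{(i)}$ is obtained from $A$ by appending the row $-\row{A}{i}$. Two observations drive the induction. First, every subdeterminant of $A^{(i)}$ is, up to sign, a subdeterminant of $A$ (the appended row being a scalar multiple of an existing one), so $\qsd{A^{(i)}}=\qsd{A}$. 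Second, $\kernel{A^{(i)}}=\kernel{A}=L$ while $\dim F_i\le\dim K-1$, whence $d(F_i)\le d(K)-1$. Thus the induction hypothesis applies to each $F_i$ and furnishes a finite $X_i\subseteq\qsd{A^{(i)}}^n=\qsd{A}^n$ with $F_i=\ccone(X_i)$, and I would set $X:=\bigcup_{i\in I}X_i$.

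It remains to prove the generation identity $K=\ccone(X)$. The inclusion $\ccone(X)\subseteq K$ is immediate since each $F_i\subseteq K$. For the reverse inclusion I would first reduce to the pointed cone $K_0:=K\cap\orthCompl{L}$, writing $x\in K$ as $x=x_L+x_0$ with $x_L\in L$ and $x_0\in K_0$ and noting that $L\subseteq F_i\subseteq\ccone(X_i)$, so the lineality part is already generated. Given a nonzero $x_0$ in the relative interior of $K_0$, I would shoot a line $x_0+tw$ through it in the direction of some nonzero $w\in\lin{K_0}$: because $K_0$ is line\nobreakdash-free, the line leaves $K_0$ for both large positive and large negative $t$, so it meets the relative boundary at two points $y_+,y_-$, each of which satisfies $\row{A}{i}y_\pm=0$ for some $i\in I$ and therefore lies in $\bigcup_{i\in I}F_i$. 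Expressing $x_0$ as the convex combination of $y_+$ and $y_-$ exhibits it in $\ccone(X)$; the boundary cases, where $x_0$ already lies in some $F_i$, are trivial. This completes the induction.

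The step I expect to be the \emph{main obstacle} is the bookkeeping that keeps all components inside $\qsd{A}$. Concretely, I must (i)~identify, in each base case, the exact square subsystem of $A$ whose Cramer solution yields each generator---in particular the single extra ray in the case $d(K)=1$, whose tight set has rank $\mathrm{rank}(A)-1$ and whose orientation must be fixed so that $Av\le\zeroVec{}$---and verify that dividing the resulting cofactors produces elements of $\qsd{A}$ (including the degenerate appearance of $0$ as a vanishing subdeterminant); and (ii)~justify cleanly that appending $-\row{A}{i}$ leaves $\qsd{\cdot}$ unchanged. By contrast, the generation identity itself is elementary once phrased via the line\nobreakdash-shooting argument above and, crucially, uses none of the finite\nobreakdash-generation theory we are in the process of establishing.
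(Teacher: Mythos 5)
Your overall architecture---an induction whose base cases are linear subspaces and halfspaces of linear subspaces, faces generated by the induction hypothesis, and a two-sided line-shooting argument for the step---mirrors the paper's, but there is a genuine gap at exactly the point the paper identifies as the core difficulty. The claim ``because $K_0$ is line-free, the line leaves $K_0$ for both large positive and large negative $t$'' is false for an arbitrary nonzero direction $w\in\lin{K_0}$: take $K_0=\setDef{x\in\R^2}{-x_1\le 0,\,-x_2\le 0}$ (pointed), $x_0=(1,1)$ in its interior, and $w=(1,0)$; the line $x_0+tw$ lies in $K_0$ for all $t\ge -1$ and exits on one side only. Pointedness guarantees the line exits in \emph{at least one} direction, not both. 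What you need is a direction with $w\notin K_0$ \emph{and} $-w\notin K_0$ (then neither $w$ nor $-w$ is a recession direction, so both rays exit). The existence of such a $w$ in $\lin{K_0}$ when $\dim K_0\ge 2$---equivalently, that $\lin{K_0}\ne K_0\cup(-K_0)$ outside the two base cases---is precisely the nontrivial statement that the paper isolates as part~(ii) of its Lemma and proves algebraically: it picks $z$ in $\kernel{C}$ orthogonal to the sum $\transpose{B}\oneVec{}$ of all inequality rows; if every such $z$ lay in $K$ one gets a dimension contradiction, and if $-z\in K$ for a $z\notin K$, then $Bz\ge\zeroVec{}$ together with $\scalProd{\transpose{B}\oneVec{}}{z}=0$ forces $Bz=\zeroVec{}$, again a contradiction. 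Without some argument of this kind (a topological alternative: if $\lin{K_0}=K_0\cup(-K_0)$, take linearly independent $x,y\in K_0$ and follow $(1-t)x-ty$ for $t\in[0,1]$; connectedness yields a nonzero point of $K_0\cap(-K_0)$, contradicting pointedness), your inductive step does not go through. Note that you also misplace the main obstacle: the Cramer bookkeeping you flag is routine, while this existence statement is where the real work lies.

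Two secondary remarks. First, your induction on $d(K)$ and the bound $d(F_i)\le d(K)-1$ require the notion of the dimension of a cone; this is workable, but the paper deliberately avoids it by inducting on the number of inequalities and by moving the tight row $\row{B}{i}$ from the inequality block into the equality block rather than appending $-\row{A}{i}$. That design choice also dissolves your worry about $\qsd{\cdot}$: the matrix never changes, so there is nothing to prove. Indeed your claimed identity $\qsd{A^{(i)}}=\qsd{A}$ is slightly off: a submatrix of $A^{(i)}$ containing both $\row{A}{i}$ and $-\row{A}{i}$ has determinant $0$, and $0$ need not belong to $\qsd{A}$ (e.g.\ for the $1\times 1$ matrix $A=(1)$ one has $\qsd{A}=\{1,-1\}$); one only gets $\qsd{A^{(i)}}\subseteq\qsd{A}\cup\{0\}$, and ruling out harmful zero entries in the generators requires an extra observation (zero entries produced by Cramer's rule on duplicate-free subsystems come from vanishing subdeterminants of $A$ itself, up to sign). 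These points are repairable; the missing two-sided exit direction is the one that is not.
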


The proof of Theorem~\ref{thm:polyConeFinite}  we are going to work out  imitates the following ``obvious'' inductive proof of the similar statement that every bounded polyhedron~$P$ is a polytope (see~\cite{Mat02}): If~$P$ consists of one point only, then the statement is clear. Otherwise, the boundary of~$P$ is the union of finitely many lower dimensional bounded polyhedra, its \emph{faces}, which are polytopes by induction. The union of finite generating sets of these polytopes yields a generating set for~$P$, since for every~$x$ in the \emph{bounded} polyhedron~$P$, any line containing~$x$ and another point from~$P$ will intersect  the boundary of~$P$ in two points of which~$x$ is a convex combination.

There are two issues to deal with in order to turn this basic idea into an elementary proof of Theorem~\ref{thm:polyConeFinite}. First, we do not want to dwell
on the geometric concepts of faces and of the dimension of a polyhedron. This can indeed easily be avoided by allowing equations in the system defining the polyhedral cone and simply basing the induction on the number of inequalities. Second, and more important, in case of polyhedral cones~$K$ (instead of bounded polyhedra) it is, in general, not true that through every $y\in K$ there is a line intersecting the boundary of~$K$ in two points (of which~$y$ is a convex combination), as one can easily see at the examples of~$K$ being a linear subspace of~$\R^n$ or a linear halfspace of such a subspace. The core of the proof of Theorem~\ref{thm:polyConeFinite} presented here is to show that these are the only inconvenient cases. This is the essence of the  following  lemma, where $\kernel{M}=\setDef{x\in\R^n}{Mx=\zeroVec{}}$ denotes the \emph{kernel} of the matrix $M\in\R^{m\times n}$ (with $\kernel{M}=\R^n$ in case of $m=0$). 

\begin{lemma}\label{lem:GeomKonv:polyKegel:HilfeVA}
	Let $B\in\R^{p\times n}$, $C\in\R^{q\times n}$ (with $p+q\ge 1$, $n\ge 1$), 
	$A=
	\scriptsize{\begin{pmatrix}
		B \\ C
	\end{pmatrix}}\in\R^{(p+q)\times n}$,
	and
	$K=\setDef{x\in\R^n}{Bx\le \zeroVec{p},Cx=\zeroVec{q}}$.
	\begin{enumerate}
		\item[(i)] If we have $\dim(\kernel{B}\cap\kernel{C})\ge\dim(\kernel{C})-1$, then there is a finite set 
		$X\subseteq \qsd{A}^n$
		 satisfying 
		$K=\ccone(X)$.
		\item[(ii)] Otherwise, there is some vector $z\in\kernel{C}\setminus\{\zeroVec{n}\}$ with
		$z\not\in K$ and 	$-z\not\in K$.
	\end{enumerate}
\end{lemma}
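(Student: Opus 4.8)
The plan is to rephrase both alternatives through a single quantity: the rank of $B$ restricted to $\kernel{C}$. Writing $L=\kernel{C}$ and noting that $\kernel{B}\cap\kernel{C}=\kernel{A}$ is exactly the kernel of the restricted map $x\mapsto Bx$ on $L$, the rank--nullity theorem gives
\[
  r:=\dim B(L)=\dim(\kernel{C})-\dim(\kernel{B}\cap\kernel{C}).
\]
Hence the hypothesis in~(i) is equivalent to $r\le 1$, while the complementary case in~(ii) is exactly $r\ge 2$. This reduces the whole lemma to understanding the subspace $B(L)\subseteq\R^p$.

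For part~(ii) I would first observe that it suffices to produce a vector $z\in L$ for which $Bz$ has both a strictly positive and a strictly negative entry: then $z\in\kernel{C}$, $z\ne\zeroVec{n}$ (because $Bz\ne\zeroVec{p}$), and neither $Bz\le\zeroVec{p}$ nor $B(-z)=-Bz\le\zeroVec{p}$ holds, so $z\notin K$ and $-z\notin K$. Since $B(L)$ is a \emph{linear subspace} of $\R^p$ of dimension $r\ge 2$, the existence of such a $z$ follows from the self-contained claim that every linear subspace $V\subseteq\R^p$ with $\dim V\ge 2$ contains a vector having a strictly positive and a strictly negative coordinate. I expect this claim to be the conceptual heart of the whole proof; I would prove its contrapositive by a support argument: if every vector of $V$ were \emph{sign-definite} (all coordinates $\ge 0$ or all $\le 0$), then comparing two such vectors and cancelling a coordinate of minimal support forces one to be a scalar multiple of the other, whence $\dim V\le 1$.

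For part~(i) the condition $r\le1$ splits into two transparent cases. If $r=0$, then $B$ vanishes on $L$, so every $x\in\kernel{C}$ already satisfies $Bx\le\zeroVec{p}$ and $K=\kernel{C}$ is a linear subspace. If $r=1$, all rows of $B$ restricted to $L$ are scalar multiples of one nonzero functional $\lambda$; reading off the signs of these multiples shows that the system $Bx\le\zeroVec{p}$ on $L$ either forces $\lambda(x)=0$---so that $K=\kernel{A}$ is again a subspace---or reduces to the single inequality $\lambda(x)\le 0$, in which case $K=\setDef{x\in\kernel{C}}{\lambda(x)\le0}$ is a halfspace of $\kernel{C}$ cut out by a single row $\row{B}{k}$ of $B$. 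In every case $K$ is therefore either a subspace $U=\kernel{M}$, or $U+\Rnonneg w$ for one extra ray $w\in K$, where $M$ is the submatrix of $A$ consisting of $C$ together with at most one row of $B$.

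It then remains to generate these base cases by vectors whose entries lie in $\qsd{A}$. For the subspace $U=\kernel{M}$ I would select a maximal linearly independent set of rows together with an associated invertible square submatrix, and read off a basis of $\kernel{M}$ from the corresponding basic solutions; by Cramer's rule each entry is a ratio of subdeterminants of $M$, hence of $A$, so it lies in $\qsd{A}$, and $X$ consists of this basis together with its negatives. For the halfspace case I would add a single ray $w$ solving the small inhomogeneous system $Cw=\zeroVec{q}$, $\row{B}{k}w=-1$, whose entries are again ratios of subdeterminants of $A$ by Cramer's rule. This last bookkeeping is the routine part of the argument; the genuine obstacle is the sign claim underlying part~(ii), together with the clean reduction of~(i) to the subspace and halfspace base cases.
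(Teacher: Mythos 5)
Your proposal is correct, and it takes a genuinely different route from the paper, most visibly in part~(ii). The paper produces the vector $z$ by an orthogonality trick: it intersects $\kernel{C}$ with the hyperplane orthogonal to the sum $\transpose{B}\oneVec{}$ of the rows of~$B$; this intersection has codimension at most~$1$ in $\kernel{C}$, so under the hypothesis of~(ii) it cannot lie in $\kernel{B}$, hence (being a linear subspace) not in~$K$, and for any $z$ in it with $z\notin K$ the identity $\scalProd{\oneVec{}}{Bz}=\scalProd{\transpose{B}\oneVec{}}{z}=0$ shows that $-z\in K$ would force $Bz=\zeroVec{}$, a contradiction. You instead pass via rank--nullity to the image $B(\kernel{C})$, note that case~(ii) is exactly $r:=\dim B(\kernel{C})\ge 2$, and reduce to a standalone lemma: a linear subspace of $\R^p$ consisting only of sign-definite vectors has dimension at most~$1$. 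That lemma is true and your minimal-ratio cancellation can be made rigorous, but it needs a bit more care than your sketch suggests: one must first argue that two linearly independent nonnegative vectors in such a subspace would have equal supports (otherwise subtracting a large multiple of one from the other already produces a mixed-sign vector), and only then does cancelling at the coordinate achieving the minimal ratio yield the contradiction. It is worth noting that the paper's device gives a one-line proof of precisely your lemma: a subspace of dimension at least~$2$ meets the hyperplane $\setDef{y\in\R^p}{\scalProd{\oneVec{}}{y}=0}$ in a nonzero vector, whose coordinates sum to zero and therefore include both a positive and a negative one. In part~(i) the two arguments are close in substance---both exhibit $K$ as a subspace or a subspace plus one ray, with Cramer's rule supplying $X\subseteq\qsd{A}^n$---but differ in execution: the paper argues uniformly with a unit normal $a$ of $U=\kernel{B}\cap\kernel{C}$ inside $\kernel{C}$ and then exchanges $a$ for a basis vector $v$ of $\kernel{C}$ already known to lie in $\qsd{A}^n$, thereby avoiding any fresh determinant computation, whereas you split into the cases $r=0$, $r=1$ and the sign pattern of the rows, and generate the ray by solving $Cw=\zeroVec{}$, $\scalProd{\row{B}{k}}{w}=-1$; that is fine, but the claim $w\in\qsd{A}^n$ then needs the small additional remark that the Cramer numerators for the right-hand side $(0,\dots,0,-1)$ are, up to sign, subdeterminants of~$A$ (expand along the column containing the right-hand side), and that the system is solvable because $\row{B}{k}$ is independent of the rows of~$C$. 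In exchange for this extra bookkeeping, your route isolates a clean, reusable linear-algebra fact and makes the dichotomy of the lemma ($r\le 1$ versus $r\ge 2$) completely transparent.
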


\begin{proof}
	For the proof of the first part, 
	suppose that 
	\[
	U=\kernel{B}\cap\kernel{C}\quad\subseteq \quad K\quad \subseteq\quad\kernel{C}
	\]
	has dimension $\dim(U)\ge\dim(\kernel{C})-1$. 
	Let $\mathcal{B}'\subseteq\qsd{A}^n$ be a basis of $\kernel{C}$ for which $\mathcal{B}=\mathcal{B}'\cap U$ 
	is a basis of~$U$; due to Cramer's rule (see, e.g., \cite[Cor.~3.1c]{Sch86}), we can choose $\mathcal{B}'\subseteq\qsd{A}^n$. 
	We have
	$U=\ccone(\mathcal{B}\cup(-\mathcal{B}))$,
	and we may assume
    $U\subsetneq K$, since otherwise the claim clearly follows with $X=\mathcal{B}\cup(-\mathcal{B})$. In particular, we have $\dim(U)=\dim(\kernel{C})-1$. Hence, there is some  $a\in\kernel{C}\setminus U$ satisfying
	$U=\setDef{x\in\kernel{C}}{\scalProd{a}{x}=0}$ and 
	$\norm{a}=1$. Due to  dimension reasons, $\kernel{C}$ is the linear subspace of $\R^n$~generated by $U\cup\{a\}$. Therefore, for each $y\in\kernel{C}\setminus U$ (with $\scalProd{a}{y}\ne 0$),
	there is some $u\in U\subseteq K$ satisfying (see Fig.~\ref{fig:Lem1a})
	\begin{equation}\label{eq:GeomKonv:polyKegel:HilfeVA:y}
		y=u+\scalProd{a}{y}a	
	\end{equation}
	and thus
	\begin{equation}\label{eq:GeomKonv:polyKegel:HilfeVA:a}
	a=u'+\frac{1}{\scalProd{a}{y}}y
	\end{equation}
	with $u'=\tfrac{-1}{\scalProd{a}{y}}u\in U\subseteq K$.
	
	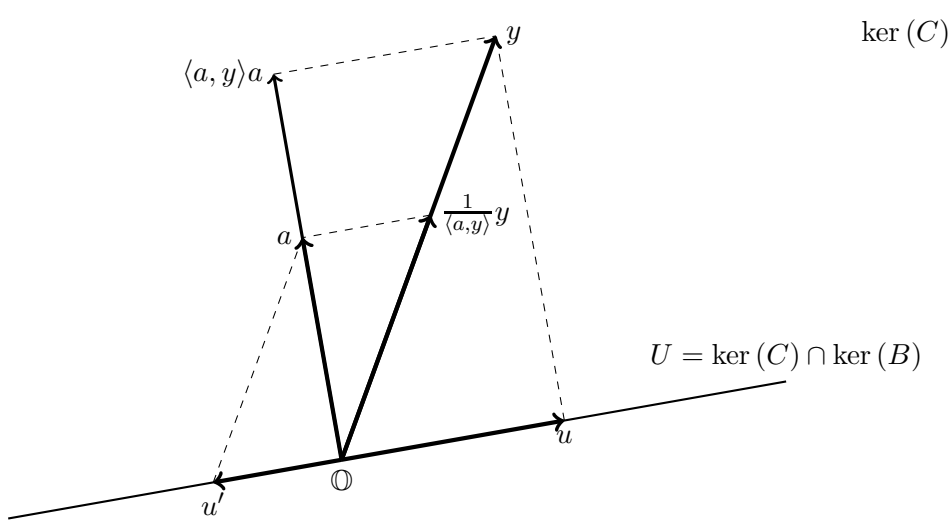
\begin{figure}[ht]
		\centering
		\begin{tikzpicture}[scale=1.5]
 	\coordinate [label=below:$\ker{(C)}$] (kerC) at (5cm,4cm);
	\draw ($(kerC)+(-0.5,0.1)$) -- ($(kerC)+(0.5,0.1)$) -- ($(kerC)+(0.5,-0.5)$); 
	\coordinate [label=below:$\zeroVec{}$] (orig) at (0,0);
 	\coordinate (UAnfang) at (canvas polar cs:angle=190,  radius=3cm) ;
	\coordinate (UEnde) at (canvas polar cs:angle=10,  radius=4cm) ;
	\coordinate (yEnde) at (canvas polar cs:angle=70, radius=4cm) ;
	\coordinate (aEnde) at (canvas polar cs:angle=100, radius=2cm) ;
	\coordinate (yaaEnde) at ($(orig)!(yEnde)!(aEnde)$) ;
	\coordinate (uEnde) at ($(orig)!(yEnde)!(UEnde)$) ;
	\coordinate [label=above:${U=\ker{(C)}\cap\ker{(B)}}$] (U) at ($(UAnfang)!1.0!(UEnde)$);
	\coordinate [label=right:$y$]  (y) at ($(orig)!1.0!(yEnde)$);
	\coordinate [label=left:$a$]  (a) at ($(orig)!1.0!(aEnde)$);
	\coordinate [label=left:$\scalProd{a}{y}a$]  (yaa) at ($(orig)!1.0!(yaaEnde)$);
	\coordinate [label=below:$u$]  (a) at ($(orig)!1.0!(uEnde)$);
 	\coordinate (aplusu) at ($(aEnde)+(uEnde)$) ;
	\coordinate [label=right:$\tfrac{1}{\scalProd{a}{y}}y$] (yayEnde) at (intersection of aEnde--aplusu and orig--yEnde);
	\coordinate [label=below:$u'$] (u'Ende) at ($(aEnde)-(yayEnde)$); 
	\draw [-,thick] (UAnfang) to (UEnde);
	\draw [->,ultra thick] (orig) to (yEnde);
	\draw [->,ultra thick] (orig) to (aEnde);
	\draw [->,very thick] (orig) to (yaaEnde);
	\draw [->,ultra thick] (orig) to (uEnde);
	\draw [->,ultra thick] (orig) to (yayEnde);
	\draw [->,ultra thick] (orig) to (u'Ende);
	\draw [-,dashed] (yaaEnde) to (yEnde);
	\draw [-,dashed] (uEnde) to (yEnde);
	\draw [-,dashed] (u'Ende) to (aEnde);
	\draw [-,dashed] (yayEnde) to (aEnde);
\end{tikzpicture}
		\caption{Illustration of relations~\eqref{eq:GeomKonv:polyKegel:HilfeVA:y} and~\eqref{eq:GeomKonv:polyKegel:HilfeVA:a}.}
		\label{fig:Lem1a}
	\end{figure}

	After  possibly replacing~$a$ by~$-a$ we have $\scalProd{a}{y}>0$ for an arbitrarily chosen $y\in K\setminus U\ne\varnothing$,  which by~\eqref{eq:GeomKonv:polyKegel:HilfeVA:a} implies $a\in K$. Thus, 
	\begin{equation}\label{eq:GeomKonv:polyKegel:HilfeVA:aTuts}
		K=\ccone(U\cup\{a\})
	\end{equation}
	holds, where ``$\supseteq$'' is clear, and, in order to prove the reverse inclusion, due to~\eqref{eq:GeomKonv:polyKegel:HilfeVA:y}, it suffices to establish~$\scalProd{a}{y}>0$ for all $y\in K \setminus U$ as follows. For each $y\in K \setminus U$ we have $\scalProd{a}{y}\ne 0$, where  $\scalProd{a}{y}<0$ yields $\tfrac{1}{\scalProd{-a}{y}}>0$, hence $-a\in K$ by relation~\eqref{eq:GeomKonv:polyKegel:HilfeVA:a} (since $u'\in U$ implies $-u'\in U\subseteq K$, as~$U$ is a linear subspace). But from $a,-a\in K$ one deduces $Ba\le\zeroVec{}$ and $-Ba\le\zeroVec{}$, thus $a\in\kernel{B}$, contradicting $a\not\in U$.
		
	Due to 
	 $\dim(U)=\dim(\kernel{C})-1$ there is exactly one vector~$v\in\mathcal{B}'\setminus\mathcal{B}$ in the  basis~$\mathcal{B}'$ of~$\kernel{C}$ that is not contained in the basis~$\mathcal{B}$ of~$U$.
	We can assume  $\scalProd{a}{v}>0$ (by possibly replacing~$v$ by~$-v$). Thus, by~\eqref{eq:GeomKonv:polyKegel:HilfeVA:y} and~\eqref{eq:GeomKonv:polyKegel:HilfeVA:a} (with $y=v$)  we have
	$\ccone(U\cup\{a\})=\ccone(U\cup\{v\})$,
	which establishes the claim in the first part (because of~\eqref{eq:GeomKonv:polyKegel:HilfeVA:aTuts}) with $X=\mathcal{B}\cup\{v\}$.
		
	\smallskip
	For the proof of the second part of the lemma, let $L\subseteq\R^n$ be the linear space generated by the sum~$\transpose{B}\oneVec{}$ of all rows of~$B$ (see Fig.~\ref{fig:Lem1b}). 
	\begin{figure}[ht]
		\centering
		\begin{tikzpicture}[scale=1.5]
 	\coordinate [label=below:$\ker{(C)}$] (kerC) at (3cm,2.5cm);
	\draw ($(kerC)+(-0.5,0.1)$) -- ($(kerC)+(0.5,0.1)$) -- ($(kerC)+(0.5,-0.5)$); 
 	\coordinate (ray1Ende) at (canvas polar cs:angle=70,  radius=2.5cm);
 	\coordinate (normalVec1Ende) at ($(orig)!.3!90:(ray1Ende)$);
 	\coordinate (ray2Ende) at (canvas polar cs:angle=300,  radius=2.5cm);
 	\coordinate (normalVec2Ende) at ($(orig)!.4!270:(ray2Ende)$);
	\coordinate [label=left:$\transpose{B}\oneVec{}$] (BT1Ende) at ($(normalVec1Ende)+(normalVec2Ende)$);
	\coordinate [label=left:$L^{\perp}\cap\kernel{C}$] (LOrthEnde) at ($(orig)!1.5!270:(BT1Ende)$);
	\coordinate [label=left:$z$] (zEnde) at ($(orig)!.4!(LOrthEnde)$);
	\coordinate [label=above:$K$] (K) at (1.2,1.2);
	\draw [-,ultra thick] (orig) to (ray1Ende);
	\draw [-,decorate,decoration={ticks,raise=-3pt,segment length=5pt}] (orig) to (ray1Ende);
	\draw [-,ultra thick] (orig) to (ray2Ende);
	\draw [-,decorate,decoration={ticks,raise=3pt,segment length=5pt}] (orig) to (ray2Ende);
	\draw [->,thick] (orig) to (normalVec1Ende);
	\draw [->,thick] (orig) to (normalVec2Ende);
	\draw [->,ultra thick] (orig) to (BT1Ende);
	\draw [-,thick] ($(orig)!-1!(LOrthEnde)$) to (LOrthEnde);
	\draw [->,ultra thick] (orig) to (zEnde);
	\draw [-,dashed] (normalVec1Ende) to (BT1Ende);
	\draw [-,dashed] (normalVec2Ende) to (BT1Ende);
	\node [circle,fill=black!50] (U) at (0,0) {};
	\node (ULabel) at (1.9,0) {${U=\ker{(C)}\cap\ker{(B)}}$};
	\draw [->,decorate,decoration={snake,segment length=5pt,amplitude=1pt,post length=5pt}] (ULabel) to (U);

\end{tikzpicture}
		\caption{Illustration of the proof of Part~(ii) of Lemma~\ref{lem:GeomKonv:polyKegel:HilfeVA}. Note that the vector labeled~$\transpose{B}\oneVec{}$ is  the orthogonal projection of~$\transpose{B}\oneVec{}$ to $\ker{(C)}$.}
		\label{fig:Lem1b}
	\end{figure}
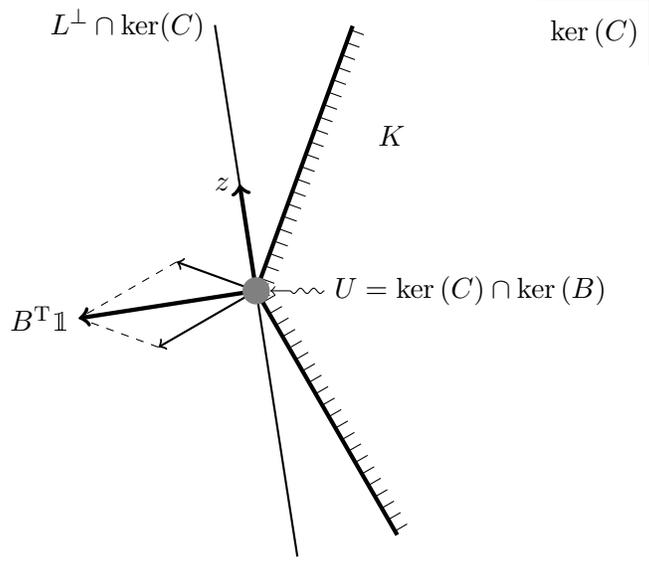
	Due to~$\dim(L)\le 1$, the orthogonal complement $L^{\perp}\subseteq\R^n$ of~$L$ in~$\R^n$ has  at least dimension~$n-1$, hence,
	\begin{equation}\label{eq:proofLemmaDimLorth}
		\dim(L^{\perp}\cap\kernel{C})\ge\dim(\kernel{C})-1
	\end{equation}
	holds. If the linear subspace  $L^{\perp}\cap\kernel{C}$ was contained in~$K$, then (due to $K\subseteq\setDef{x\in\R^n}{Bx\le\zeroVec{p}}$)
	 the linear subspace 
	 $L^{\perp}\cap\kernel{C}$  would be a subset of $\kernel{B}$ (again, as $Bx\le\zeroVec{}$ and $-Bx\le\zeroVec{}$ imply $x\in\kernel{B}$), which, by~\eqref{eq:proofLemmaDimLorth}, would contradict the assumption on the dimension of $\dim(\kernel{B}\cap\kernel{C})$ in the second part of the lemma. 

Thus, there is some	$z\in (L^{\perp}\cap\kernel{C})\setminus K$.
	Suppose we have $-z\in K$, thus $B(-z)\le\zeroVec{P}$, and hence $Bz\ge\zeroVec{p}$. Because of $\scalProd{\oneVec{p}}{Bz}=\scalProd{\transpose{B}\oneVec{p}}{z}=0$
	(due to $z\in L^{\perp}$) this implies $Bz=\zeroVec{p}$, which, however, due to $z\in\kernel{C}$, contradicts $z\not\in K$. Thus, $z$ is a vector as claimed to exist in the second part of the lemma.
\end{proof}

Using Lemma~\ref{lem:GeomKonv:polyKegel:HilfeVA}, we can now easily prove Theorem~\ref{thm:polyConeFinite} by establishing, by induction on $p=0,1,\dots$, that, for every $B\in\R^{p\times n}$ and $C\in\R^{q\times n}$ (with $p+q\ge 1$, $n\ge 1$) there is a finite set $X\subseteq \qsd{A}^n$  with
\[
K=\setDef{x\in\R^n}{Bx\le\zeroVec{p},Cx=\zeroVec{q}}=\ccone(X)
\]
(where $A=\small{\begin{pmatrix}B\\C\end{pmatrix}}\in\R^{(p+q)\times n}$). 

For $p=0$, this follows readily from the first part of  Lemma~\ref{lem:GeomKonv:polyKegel:HilfeVA}. 
For $p\ge 1$, we may assume that there is some
$z\in\kernel{C}\setminus\{\zeroVec{n}\}$
with
$z,-z\not\in K$ 
 (otherwise, the claim again follows by the first part of Lemma~\ref{lem:GeomKonv:polyKegel:HilfeVA}). 
For all $i\in\{1,\dots,p\}$, let $B^{(i)}\in\R^{(p-1)\times n}$ be the matrix that arises from~$B$ by deleting row~$i$.   
By induction hypothesis, applied to
\[
K_i:=\setDef{x\in\R^n}{B^{(i)}x\le\zeroVec{p-1},\scalProd{\row{B}{i}}{x}=0,Cx=\zeroVec{q}}\,,
\]
there is some finite set  $X_i\subseteq\qsd{A}^n$ with $K_i=\ccone(X_i)$. It suffices to show $K\subseteq\ccone(X)$ for $X=\bigcup_{i=1}^p X_i$
 (because $\ccone(X)\subseteq K$ is clear).
Towards this end, let $x\in K$ (see~Fig.~\ref{fig:Thm}). 
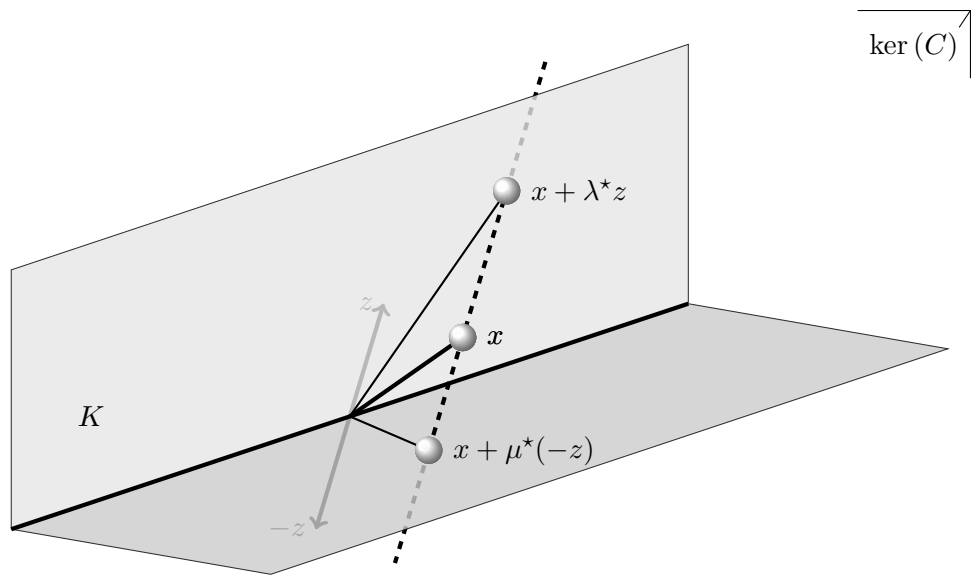
\begin{figure}[ht]
	\centering
	\begin{tikzpicture}[scale=1.5]
 	\coordinate [label=below:$\ker{(C)}$] (kerC) at (5cm,3.5cm);
	\draw ($(kerC)+(-0.5,0.1)$) -- ($(kerC)+(0.5,0.1)$) -- ($(kerC)+(0.5,-0.5)$); 
	\draw ($(kerC)+(0.5,0.1)$) -- ++(-.1,-.15); 	
	\coordinate (orig) at (0,0);
	\coordinate (UAnfang) at (-3,-1);
	\coordinate (UEnde) at (3,1);
	\coordinate [label=right:$K$] (K) at ($(UAnfang)+(.5,1)$);
	\coordinate (vecHinten) at (0,2.3);
	\coordinate (vecVorne) at (2.3,-0.4);
	\coordinate [label=left:$z$] (zEnde) at (0.3,1);
	\coordinate [label=left:$-z$] (-zEnde) at ($-1*(zEnde)$);
	\coordinate (xEnde) at (1.0,0.7) {};
	\coordinate (lochHinten) at ($(xEnde)+1.3*(zEnde)$) {};
	\coordinate (lochVorne) at ($(xEnde)-1.0*(zEnde)$) {};
	\draw [->,ultra thick] (orig) to (zEnde);
	\draw [->,ultra thick] (orig) to ($-1*(zEnde)$);
	\draw [-,dashed,ultra thick] (lochHinten) -- ++($1.2*(zEnde)$);
	\draw [-,dashed,ultra thick] (lochVorne) -- ++($1.0*(-zEnde)$);
	\draw [fill=black!10,opacity=.8] (UAnfang) -- ++(vecHinten) -- ++($(UEnde)-(UAnfang)$) -- (UEnde);
	\draw [fill=black!20,opacity=.8] (UAnfang) -- ++(vecVorne) -- ++($(UEnde)-(UAnfang)$) -- (UEnde);
	\draw [-,ultra thick] (UAnfang) to (UEnde);
	\coordinate [label=right:$K$] (K) at ($(UAnfang)+(.5,1)$);
	\node [circle,ball color=white,opacity=.3,label=right:$x$] (xEndeNode) at (xEnde) {};
	\draw [->,ultra thick] (orig) to (xEnde);
	\draw [->,thick] (orig) to (lochHinten);
	\draw [->, thick] (orig) to (lochVorne);
	\draw [-,dashed,ultra thick] (lochVorne) to (lochHinten);
	\node [circle,ball color=white,opacity=.5,label=right:$x$] (xEndeNode) at (xEnde) {};
	\node [circle,ball color=white,opacity=.5,label=right:$x+\lambda^{\star}z$] (lochHintenNode) at (lochHinten) {};
	\node [circle,ball color=white,opacity=.5,label=right:$x+\mu^{\star}(-z)$] (lochVorneNode) at (lochVorne) {};
	
\end{tikzpicture}
	\caption{Convex combination of~$x$ in the polyhedral cone~$K$ (in this example being the intersection of two linear halfspaces in $\ker{(C)}$, viewed ``from above'') by two vectors from $\ccone{(X)}$ in the proof of Theorem~\ref{thm:polyConeFinite}.}
	\label{fig:Thm}
\end{figure}
The set
\[
	I=\setDef{i\in\{1,\dots,p\}}{\scalProd{\row{B}{i}}{z}>0}\ne\varnothing
\] 
is non-empty due to $Cz=\zeroVec{}$ and $z\not\in K$. For all $i\in I$, the number 
$\lambda_i=-\frac{\scalProd{\row{B}{i}}{x}}{\scalProd{\row{B}{i}}{z}}\ge 0$
is nonnegative
(due to $Bx\le\zeroVec{}$). We have $\scalProd{\row{B}{i}}{(x+\lambda z)}\le 0$ for all $0\le \lambda\le\lambda_i$ with equality for $\lambda=\lambda_i$. Now we choose $i\st\in I$  such that 
$\lambda\st=\lambda_{i\st}=\min\setDef{\lambda_i}{i\in I}$
holds (with $\lambda\st\ge 0$). Then, we have $B(x+\lambda\st z)\le\zeroVec{p}$ and $\scalProd{\row{B}{i\st}}{(x+\lambda\st z)}=0$. Thus, (due to $Cz=\zeroVec{q}$) we conclude 
$x+\lambda\st z\in K_{i\st}= \ccone(X_{i\st})\subseteq\ccone(X)$.

Similarly, (due to $-z\not\in K$) one finds some $\mu\st\ge 0$ with $x+\mu\st (-z)\in \ccone{X}$. Hence~$x$, as a convex combination of $x+\lambda\st z\in\ccone{X}$ and $x-\mu\st z\in\ccone{X}$ (with $\lambda\st,\mu\st\ge 0$), is contained in~$\ccone(X)$. This concludes the proof of  Theorem~\ref{thm:polyConeFinite}.

\paragraph*{Acknowledgments} I am grateful to Matthias Peinhardt for valuable comments on a draft of this note.

\bigskip

\end{document}